\newtheorem{theorem}{Theorem}[section]
\newtheorem{proposition}[theorem]{Proposition}
\newtheorem{lemma}[theorem]{Lemma}
\newtheorem{corollary}[theorem]{Corollary}
\theoremstyle{remark}
\newcommand{\defn}[1]{{\color{green!50!black}\emph{#1}}}
\newcommand{\defs}{\stackrel{\mathrm{def}}{=}}
\newcommand{\Lattice}{\mathbf{L}}
\newcommand{\Poset}{\mathbf{P}}
\newcommand{\Bool}{\mathsf{Bool}}
\newcommand{\CLO}{\mathsf{CLO}}
\newcommand{\clorder}{\leq_{\mathsf{clo}}}
\newcommand{\JI}{\mathsf{JoinIrr}}
\newcommand{\canset}{\Gamma}
\newcommand{\CanComplex}{\mathsf{Can}}
\newcommand{\Covers}{\mathcal{E}}
\newcommand{\least}{\hat{0}}
\newcommand{\grtst}{\hat{1}}
\newcommand{\perspective}{\doublebarwedge}
\newcommand{\jsdlabeling}{\lambda_{\mathsf{jsd}}}
\newcommand{\bdef}{\mathsf{bdef}}
\title{Meet-Distributive Lattices have the Intersection Property}
\author{Henri M{\"u}hle}
\address{Technische Universit{\"a}t Dresden, Institut f{\"u}r Algebra, Zellescher Weg 12--14, 01069 Dresden, Germany.}
\email{henri.muehle@tu-dresden.de}
\keywords{meet-distributive lattices, congruence-uniform lattices, canonical join complex, core label order, intersection property}
\subjclass[2010]{06D75}
\thanks{The author has received funding from the European Research Council (Grant Agreement no. 681988, CSP-Infinity).}
\begin{document}

\begin{abstract}
	Meet-distributive lattices form an intriguing class of lattices, because they are precisely the lattices obtainable from a closure operator with the so-called anti-exchange property.  Moreover, meet-distributive lattices are join semidistributive.  Therefore, they admit two natural, secondary structures: the core label order is an alternative order on the lattice elements and the canonical join complex is the flag-simplicial complex on canonical join representations.  In this article we present a characterization of finite meet-distributive lattices in terms of the core label order and the canonical join complex, and we show that the core label order of a finite meet-distributive lattice is always a meet-semilattice.
\end{abstract}

\maketitle

\section{Introduction}
	\label{sec:introduction}
A lattice $\Lattice$ is join semidistributive if every element admits a canonical expression as a join of join-irreducible elements~\cites{whitman41free,whitman42free}.  Consequently, the word problem can be solved efficiently in these lattices.  The set of canonical join representations of a lattice forms a simplicial complex~\cite{reading15noncrossing}*{Proposition~2.2}; the \defn{canonical join complex} of $\Lattice$.  If $\Lattice$ is join semidistributive, then the faces of the canonical join complex are naturally indexed by the elements of $\Lattice$.

Moreover, when $\Lattice$ is join semidistributive, canonical join representations can be computed easily with the help of a certain edge-labeling which is determined by a perspectivity relation~\cite{barnard19canonical}.  This labeling is essentially unique and can be used to define an alternative partial order on $\Lattice$; the \defn{core label order}.

This order first appeared N.~Reading's research on congruence-uniform lattices of regions of real hyperplane arrangements.  We have investigated this order abstractly for congruence-uniform lattices in \cite{muehle19the}.  For some special cases the core label order was studied in \cites{bancroft11the,clifton18canonical,garver17enumerative,garver18oriented,muehle21noncrossing,petersen13on,reading11noncrossing}.  

An interesting subclass of join-semidistributive lattices are \defn{meet-distributive} lattices, which have the property that every interval $[x,y]$---where $x$ is the meet of the elements covered by $y$---is isomorphic to a Boolean lattice~\cites{dilworth40lattices,edelman80meet}.  It turns out that we can use the core label order and the canonical join complex to characterize meet-distributive lattices.
	
\begin{theorem}\label{thm:meet_distributive_characterization_face_poset}
	A finite join-semidistributive lattice $\Lattice$ is meet-distributive if and only if $\CLO(\Lattice)$ is the face poset of the canonical join complex of $\Lattice$.
\end{theorem}

We want to point out that we can also use the core label order to characterize finite Boolean lattices.  They are precisely the join-semidistributive lattices that are isomorphic to their own core label order~\cite{muehle19the}*{Theorem~1.5}.  Consequently, the canonical join complex of a finite Boolean lattice is a simplex.

In \cite{reading16lattice}*{Problem~9.5}, N.~Reading asked under what conditions the core label order is again a lattice.  In \cite{muehle19the}*{Section~4.2} we found one such property, which we call the \defn{intersection property}.  This property can be used to characterize the join-semidistributive lattices whose core label orders are meet-semilattices~\cite{muehle19the}*{Theorem~4.8}.  We conclude this article with the observation that every meet-distributive lattice has the intersection property.

\begin{theorem}\label{thm:meet_distributive_intersection_property}
	Every finite meet-distributive lattice $\Lattice$ has the intersection property.  Consequently, $\CLO(\Lattice)$ is a meet-semilattice, and it is a lattice if and only if $\Lattice$ is isomorphic to a Boolean lattice.
\end{theorem}
	
We first recall the necessary basic notions in Section~\ref{sec:distributive}.  After that we define the core label order of a lattice in Section~\ref{sec:core_label_order}, and we define the canonical join complex of a join-semidistributive lattice in Section~\ref{sec:canonical_join_complex_jsd}, where we also prove Theorem~\ref{thm:meet_distributive_characterization_face_poset}.  In Section~\ref{sec:intersection_property} we define the intersection property and prove Theorem~\ref{thm:meet_distributive_intersection_property}.
	
\section{Preliminaries}
	\label{sec:distributive}
\subsection{Basic Notions}
	\label{sec:basics}
Let $\Poset=(P,\leq)$ be a partially ordered set (\defn{poset} for short).  The \defn{dual} poset of $\Poset$ is $\Poset^{*}\defs(P,\geq)$.  

An element $x\in P$ is \defn{minimal} in $\Poset$ if $y\leq x$ implies $y=x$ for all $y\in P$.  Dually, $x\in P$ is \defn{maximal} in $\Poset$ if it is minimal in $\Poset^{*}$.

A \defn{cover relation} of $\Poset$ is a pair $(x,y)$ such that $x<y$ and there is no $z\in P$ such that $x<z<y$.  We usually write $x\lessdot y$ for a cover relation, and we denote the set of all cover relations of $\Poset$ by $\Covers(\Poset)$.  Moreover, if $x\lessdot y$, then we call $x$ a \defn{lower cover} of $y$, and $y$ an \defn{upper cover} of $x$.

A \defn{chain} of $\Poset$ is a totally ordered subset of $P$, and it is \defn{saturated} if it can be written as a sequence of cover relations.  A saturated chain is \defn{maximal} if it contains a minimal and a maximal element of $\Poset$.

We say that $\Poset$ is a \defn{lattice} if for every two elements $x,y\in P$ there exists a greatest lower bound $x\wedge y$ (the \defn{meet}) and a least upper bound $x\vee y$ (the \defn{join}).  Observe that every finite lattice has a unique minimal element (denoted by $\least$) and a unique maximal element (denoted by $\grtst$).

A lattice is \defn{Boolean} if it is isomorphic to the family of subsets of some set $M$ ordered by inclusion.  If $\lvert M\rvert=n$, then we write $\Bool(n)$ for the Boolean lattice with $2^{n}$ elements.

\subsection{Join-Semidistributive Lattices}
	\label{sec:join_semidistributive_lattices}
Let $\Lattice=(L,\leq)$ be a lattice. A \defn{join representation} of $x\in L$ is a set $X\subseteq L$ with $x=\bigvee X$.  A join representation $X$ of $x$ \defn{join-refines} a join representation $X'$ of $x$ if for every $y\in X$ there exists some $y'\in X'$ such that $y\leq y'$.  A join representation $X$ of $x$ is \defn{irredundant} if no proper subset of $X$ joins to $x$, and it is \defn{canonical} if it join-refines every other join representation of $x$.  We denote the canonical join representation of $x\in L$ by $\canset(x)$ (if it exists). 

It turns out that the finite lattices in which every element admits a canonical join representation can be characterized algebraically.  A lattice $\Lattice=(L,\leq)$ is \defn{join semidistributive} if for all $x,y,z\in L$ the following implication holds:
\begin{equation}\label{eq:jsd}\tag{JSD}
	x\vee y=x\vee z\quad\text{implies}\quad x\vee y=x\vee(y\wedge z).
\end{equation}
% Then, $\Lattice$ is \defn{meet semidistributive} if $\Lattice^{*}$ is join semidistributive, and it is \defn{semidistributive} if it is both join and meet semidistributive.  

\begin{theorem}[\cite{freese95free}*{Theorem~2.24}]\label{thm:join_semidistributive_characterization}
	A finite lattice is join semidistributive if and only if every element admits a canonical join representation.
\end{theorem}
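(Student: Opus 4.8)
The plan is to establish the two implications separately. For the easy direction, that existence of canonical join representations forces join semidistributivity, I would argue directly: suppose every element has a canonical join representation and let $x,y,z\in L$ satisfy $x\vee y=x\vee z=:w$. Write $\Canonical(w)=\{j_1,\dots,j_k\}$. Since $\{x,y\}$ and $\{x,z\}$ are both join representations of $w$, the defining inclusions $L_{\leq\Canonical(w)}\subseteq L_{\leq\{x,y\}}$ and $L_{\leq\Canonical(w)}\subseteq L_{\leq\{x,z\}}$ force each $j_i$ either below $x$, or below both $y$ and $z$ and hence below $y\wedge z$. Therefore $w=\bigvee_i j_i\leq x\vee(y\wedge z)\leq x\vee y=w$, which gives $x\vee(y\wedge z)=x\vee y$, as required. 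This step is short and uses only the refinement property built into the definition of $\Canonical$.

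The substantive direction is that join semidistributivity guarantees a canonical join representation for every $x\in L$, and I would construct it explicitly. The key tool is a ``least complement'' observation: for $a\leq b$ the set $\{t : a\vee t=b\}$, when nonempty, is closed under meets, since $a\vee t=a\vee t'=b$ yields $a\vee(t\wedge t')=b$ by the defining identity of $\mathrm{SD}_{\vee}$; as $L$ is finite this set then has a least element. Applying this with $b=x$ and $a=x_i$ ranging over the lower covers $x_1,\dots,x_r$ of $x$, I obtain for each $i$ the least $c_i$ with $c_i\vee x_i=x$, and I claim $\Canonical(x)=\{c_1,\dots,c_r\}$ (with the degenerate case $r=0$, i.e.\ $x=\least$, giving the empty representation).

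I would then verify the three requirements. First, each $c_i$ is join irreducible: if $c_i=u\vee v$ with $u,v<c_i$, minimality gives $u\vee x_i\neq x\neq v\vee x_i$, and since $x_i\lessdot x$ the interval $[x_i,x]$ is trivial, forcing $u,v\leq x_i$ and hence $c_i\leq x_i$, contradicting $c_i\vee x_i=x$. Second, $\{c_1,\dots,c_r\}$ is a join representation of $x$: if $\bigvee_i c_i=x'<x$ then $x'$ lies below some lower cover $x_j$, whence $c_j\leq x_j$ and $c_j\vee x_j\neq x$, a contradiction. Third, for the canonical (refinement) property, let $Y$ be any join representation of $x$ and fix $i$; some $y\in Y$ satisfies $y\not\leq x_i$ (otherwise $\bigvee Y\leq x_i<x$), and then $y\vee x_i=x$ because $x_i\lessdot x$, so minimality of $c_i$ yields $c_i\leq y$. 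Thus every $c_i$ lies in $L_{\leq Y}$, giving $L_{\leq\{c_1,\dots,c_r\}}\subseteq L_{\leq Y}$ for every join representation $Y$.

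The main obstacle, and the only place join semidistributivity is genuinely used, is the least-complement lemma: one must recognize that the defining implication of $\mathrm{SD}_{\vee}$ is exactly meet-closure of the complement sets, and then that the cover relation $x_i\lessdot x$ is what upgrades ``$y\not\leq x_i$'' into the sharp equality $y\vee x_i=x$ needed to invoke minimality of $c_i$. Once these two observations are in place the construction is essentially forced, and as a bonus it exhibits $\Canonical(x)$ concretely as the set of least complements of the lower covers of $x$, each of which is join irreducible.
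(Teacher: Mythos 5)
Your proof is correct, and there is nothing in the paper to compare it against: the paper imports this statement from Freese--Je{\v z}ek--Nation (\cite{freese95free}*{Theorem~2.24}) without proof, so you have in effect supplied the missing argument. Both directions check out. The easy direction is the standard refinement argument: each canonical joinand $j_i$ of $w=x\vee y=x\vee z$ satisfies $j_i\leq x$ or $j_i\leq y\wedge z$, forcing $w\leq x\vee(y\wedge z)$. For the hard direction, your least-complement construction is sound: $\mathrm{SD}_{\vee}$ is exactly meet-closure of $\{t\mid t\vee x_i=x\}$ (nonempty, as it contains $x$), finiteness yields a least element $c_i$, and the cover relation $x_i\lessdot x$ correctly upgrades $y\not\leq x_i$ to $y\vee x_i=x$ in the refinement step; the verification that $\bigvee_i c_i=x$ (any strictly smaller join would sit below some lower cover $x_j$, trapping $c_j\leq x_j$) and the join-irreducibility of each $c_i$ are also right. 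This is essentially the classical proof, and it has a pleasant resonance with the rest of the paper: your $c_i$ is precisely the join-irreducible $\gamma(x_i,x)$ of the labeling \eqref{eq:congruence_uniform_labeling}, since $c_i\vee x_i=x$ and minimality force $(c_{i*},c_i)$ and $(x_i,x)$ to be perspective; so your construction recovers, in the purely join-semidistributive setting, the description $\Canonical(x)=\{\gamma(y,x)\mid y\lessdot x\}$ that the paper quotes as Proposition~\ref{prop:congruence_uniform_canonical} for congruence-uniform lattices. One cosmetic remark: under the paper's literal definition of canonical (only $L_{\leq X}\subseteq L_{\leq Y}$ for all join representations $Y$, with irredundancy stated separately), your join-irreducibility paragraph is a bonus rather than a required step, but it is worth keeping, as it is what makes $\Canonical(x)$ an irredundant antichain of join-irreducibles and hence unique in the usual sense.
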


\subsection{Meet-Distributive Lattices}
	\label{sec:meet_distributive_lattices}
We now move to a subfamily of the join-semi\-distributive lattices.  Let $\Lattice=(L,\leq)$ be a lattice.  For $x\in L$, we define its \defn{nucleus} to be
\begin{displaymath}
	x_{\downarrow} \defs x\wedge\bigwedge_{y\in L\colon y\lessdot x}{y}.
\end{displaymath}
We call the interval $[x_{\downarrow},x]$ the \defn{core} of $x$.  Then, $\Lattice$ is \defn{meet distributive} if for every $x\in L$, the core $[x_{\downarrow},x]$ is isomorphic to a Boolean lattice.  Figure~\ref{fig:jsd_lattice} shows a join-semidistributive lattice that is not meet distributive, and Figure~\ref{fig:md_lattice} shows a meet-distributive lattice.

\begin{figure}
	\centering
	\begin{subfigure}[t]{.45\textwidth}
		\centering
		\begin{tikzpicture}\small
			\def\x{1};
			\def\y{1};
			\def\s{.6};
			\draw(2*\x,1*\y) node[fill,circle,scale=.6](n1){};
			\draw(1*\x,2*\y) node[fill,circle,scale=.6](n2){};
			\draw(3*\x,2.5*\y) node[fill,circle,scale=.6](n3){};
			\draw(1*\x,3*\y) node[fill,circle,scale=.6](n4){};
			\draw(2*\x,4*\y) node[fill,circle,scale=.6](n5){};
			\draw(n1) -- (n2) node[midway,fill=white,inner sep=.2pt,scale=.9]{$1$};
			\draw(n1) -- (n3) node[midway,fill=white,inner sep=.2pt,scale=.9]{$2$};
			\draw(n2) -- (n4) node[midway,fill=white,inner sep=.2pt,scale=.9]{$3$};
			\draw(n3) -- (n5) node[midway,fill=white,inner sep=.2pt,scale=.9]{$1$};
			\draw(n4) -- (n5) node[midway,fill=white,inner sep=.2pt,scale=.9]{$2$};
		\end{tikzpicture}
		\caption{A join-semidistributive lattice.}
		\label{fig:jsd_lattice}
	\end{subfigure}
	\hspace*{1cm}
	\begin{subfigure}[t]{.45\textwidth}
		\centering
		\begin{tikzpicture}\small
			\def\x{1};
			\def\y{1};
			\def\s{.6};
			\draw(2*\x,1*\y) node(n1){$\emptyset$};
			\draw(1*\x,2*\y) node(n2){$\{1\}$};
			\draw(2*\x,2*\y) node(n3){$\{2\}$};
			\draw(3*\x,2*\y) node(n4){$\{3\}$};
			\draw(2*\x,3*\y) node(n5){$\{1,2,3\}$};
			\draw(n1) -- (n2);
			\draw(n1) -- (n3);
			\draw(n1) -- (n4);
			\draw(n2) -- (n5);
			\draw(n3) -- (n5);
			\draw(n4) -- (n5);
		\end{tikzpicture}
		\caption{The core label order of the lattice from Figure~\ref{fig:jsd_lattice}.}
		\label{fig:jsd_lattice_clo}
	\end{subfigure}
	\caption{A join-semidistributive lattice and its core label order.}
	\label{fig:jsd_lattices}
\end{figure}

\begin{figure}
	\centering
	\begin{subfigure}[t]{.45\textwidth}
		\centering
		\begin{tikzpicture}\small
			\def\x{1};
			\def\y{1};
			\def\s{.6};
			\draw(2*\x,1*\y) node[fill,circle,scale=.6](n1){};
			\draw(1*\x,2*\y) node[fill,circle,scale=.6](n2){};
			\draw(2*\x,2*\y) node[fill,circle,scale=.6](n3){};
			\draw(3*\x,2*\y) node[fill,circle,scale=.6](n4){};
			\draw(1.5*\x,3*\y) node[fill,circle,scale=.6](n5){};
			\draw(2.5*\x,3*\y) node[fill,circle,scale=.6](n6){};
			\draw(2*\x,4*\y) node[fill,circle,scale=.6](n7){};
			\draw(n1) -- (n2) node[midway,fill=white,inner sep=.2pt,scale=.9]{$1$};
			\draw(n1) -- (n3) node[midway,fill=white,inner sep=.2pt,scale=.9]{$2$};
			\draw(n1) -- (n4) node[midway,fill=white,inner sep=.2pt,scale=.9]{$3$};
			\draw(n2) -- (n5) node[midway,fill=white,inner sep=.2pt,scale=.9]{$2$};
			\draw(n3) -- (n5) node[midway,fill=white,inner sep=.2pt,scale=.9]{$1$};
			\draw(n3) -- (n6) node[midway,fill=white,inner sep=.2pt,scale=.9]{$3$};
			\draw(n4) -- (n6) node[midway,fill=white,inner sep=.2pt,scale=.9]{$2$};
			\draw(n5) -- (n7) node[midway,fill=white,inner sep=.2pt,scale=.9]{$3$};
			\draw(n6) -- (n7) node[midway,fill=white,inner sep=.2pt,scale=.9]{$1$};
		\end{tikzpicture}
		\caption{A meet-distributive lattice.}
		\label{fig:md_lattice}
	\end{subfigure}
	\hspace*{1cm}
	\begin{subfigure}[t]{.45\textwidth}
		\centering
		\begin{tikzpicture}\small
			\def\x{1};
			\def\y{1};
			\def\s{.6};
			\draw(2*\x,1*\y) node(n1){$\emptyset$};
			\draw(1*\x,2*\y) node(n2){$\{1\}$};
			\draw(2*\x,2*\y) node(n3){$\{2\}$};
			\draw(3*\x,2*\y) node(n4){$\{3\}$};
			\draw(1*\x,3*\y) node(n5){$\{1,2\}$};
			\draw(2*\x,3*\y) node(n6){$\{1,3\}$};
			\draw(3*\x,3*\y) node(n7){$\{2,3\}$};
			\draw(n1) -- (n2);
			\draw(n1) -- (n3);
			\draw(n1) -- (n4);
			\draw(n2) -- (n5);
			\draw(n2) -- (n6);
			\draw(n3) -- (n5);
			\draw(n3) -- (n7);
			\draw(n4) -- (n6);
			\draw(n4) -- (n7);
		\end{tikzpicture}
		\caption{The core label order of the lattice from Figure~\ref{fig:md_lattice}.}
		\label{fig:md_lattice_clo}
	\end{subfigure}
	\caption{A meet-distributive lattice and its core label order.}
	\label{fig:md_lattices}
\end{figure}

The following result characterizes meet-distributive lattices.  Recall that $\Lattice$ is \defn{lower semimodular} if for all $x,y\in L$ whenever $x,y\lessdot x\vee y$, then $x\wedge y\lessdot x,y$.

\begin{theorem}[\cite{adaricheva03join}*{Theorem~1.9}]\label{thm:meet_distributive_characterization}
	A finite lattice is meet distributive if and only if it is join semidistributive and lower semimodular.
\end{theorem}

Meet-distributive lattices are precisely the lattices that arise from a closure operator satisfying the so-called \emph{anti-exchange property}, see \cites{adaricheva03join,armstrong09sorting,edelman80meet}.

\section{The Core Label Order of a Join-Semidistributive Lattice}
	\label{sec:core_label_order_jsd}
\subsection{The Core Label Order}
	\label{sec:core_label_order}
Motivated by the study of the poset of regions of real hyperplane arrangements, N.~Reading introduced an alternate way to order the elements of a congruence-uniform lattice~\cite{reading16lattice}*{Section~9-7.4}.  In fact, we may generalize this construction to arbitrary, finite lattices.

Let $\Lattice=(L,\leq)$ be a finite lattice, let $M$ be a set and let $\lambda\colon\Covers(\Lattice)\to M$ be an \defn{edge labeling} of $\Lattice$.  The \defn{core label set} of $x\in L$ (with respect to $\lambda$) is
\begin{displaymath}
	\Psi_{\lambda}(x) \defs \bigl\{\lambda(u,v)\mid x_{\downarrow}\leq u\lessdot v\leq x\bigr\}.
\end{displaymath}
We may now define $x\clorder y$ if and only if $\Psi_{\lambda}(x)\subseteq\Psi_{\lambda}(y)$.  In general, this results in a quasi-ordered set $\CLO_{\lambda}(\Lattice)\defs(L,\clorder)$.

We say that $\lambda$ is a \defn{core labeling} if the assignment $x\mapsto\Psi_{\lambda}(x)$ is \defn{injective}.  If $\lambda$ is a core labeling, then it is quickly checked that $\CLO(\Lattice)$ is in fact a partial order; the \defn{core label order}.  Figures~\ref{fig:jsd_lattices} and \ref{fig:md_lattices} illustrate this construction.

\subsection{A Perspectivity Labeling}
	\label{sec:perspectivity_labeling}
Two cover relations $(x_{1},y_{1}),(x_{2},y_{2})\in\Covers(\Lattice)$ are \defn{perspective} if either $y_{1}\vee x_{2}=y_{2}$ and $y_{1}\wedge x_{2}=x_{1}$ or $y_{2}\vee x_{1}=y_{1}$ and $y_{2}\wedge x_{1}=x_{2}$.  We write $(x_{1},y_{1})\perspective (x_{2},y_{2})$ in this case.  This definition is illustrated in Figure~\ref{fig:perspectivity}.

\begin{figure}
	\centering
	\begin{tikzpicture}\small
		\def\x{1};
		\def\y{1};
		\def\s{.5};
		\draw(2*\x,1*\y) node[fill,circle,scale=.6](n1){};
		\draw(1*\x,2*\y) node[fill,circle,scale=.6](n2){};
		\draw(3*\x,2*\y) node[fill,circle,scale=.6](n3){};
		\draw(1*\x,3*\y) node[fill,circle,scale=.6](n4){};
		\draw(3*\x,3*\y) node[fill,circle,scale=.6](n5){};
		\draw(2*\x,4*\y) node[fill,circle,scale=.6](n6){};
		\draw[thick,draw=green!50!black](n1) -- (n2);
		\draw(n1) -- (n3);
		\draw(n2) -- (n4);
		\draw(n3) -- (n5);
		\draw(n4) -- (n6);
		\draw[thick,draw=green!50!black](n5) -- (n6);
		\draw[white!50!gray,<->,dashed](1.55*\x,1.55*\x) -- (2.45*\x,3.45*\x);
	\end{tikzpicture}
	\caption{The green edges represent perspective cover relations.}
	\label{fig:perspectivity}
\end{figure}

Recall another useful fact about join-semidistributive lattices.  An element $j\in L$ is \defn{join irreducible} if whenever $j=x\vee y$, then $j\in\{x,y\}$.  The set of join-irreducible elements of $\Lattice$ is denoted by $\JI(\Lattice)$.  In particular, if $\Lattice$ is finite and $j\in\JI(\Lattice)$, then there exists a unique element $j_{*}\in L$ such that $(j_{*},j)\in\Covers(\Lattice)$.

\begin{lemma}[\cite{adaricheva03join}*{Lemma~1.8}]\label{lem:jsd_labeling}
	Let $\Lattice$ be a finite join-semidistributive lattice.  For $(x,y)\in\Covers(\Lattice)$, the set $\{z\in L\mid z\leq y\;\text{and}\;z\not\leq x\}$ has a unique minimal element $j$, and $j$ is join irreducible.
\end{lemma}

This gives rise to the following edge-labeling of a finite, join-semidistributive lattice $\Lattice$:
\begin{equation}\label{eq:jsd_labeling}
	\jsdlabeling\colon\Covers(\Lattice)\to\JI(\Lattice), \quad (x,y)\mapsto\bigwedge\{z\in L\mid z\leq y\;\text{and}\;z\not\leq x\}.
\end{equation}
This labeling is illustrated in Figures~\ref{fig:jsd_lattice} and \ref{fig:md_lattice}.

\begin{lemma}\label{lem:perspective_irreducibles}
	Let $(x,y)\in\Covers(\Lattice)$ and $j\in\JI(\Lattice)$.  If $(x,y)\perspective(j_{*},j)$, then $j\leq y$.
\end{lemma}
\begin{proof}
	If $(x,y)\perspective(j_{*},j)$, then either $j\leq y$ or $y\leq j$.  The latter case, however, forces the existence of two lower covers of $j$, contradicting that $j$ is join irreducible.
\end{proof}

We now show that the labeling $\jsdlabeling$ is a \emph{canonical} labeling of a finite, join-semi\-distributive lattice, because it is determined by the perspectivity relation.

\begin{lemma}\label{lem:jsd_labeling_perspective}
	Let $(x,y)\in\Covers(\Lattice)$.  Then $\jsdlabeling(x,y)=j$ if and only if $(j_{*},j)\perspective(x,y)$.  
\end{lemma}
\begin{proof}
	Suppose that $\jsdlabeling(x,y)=j$.  By definition, $x\vee j=y$ and thus $x\wedge j<j$.  Since $j$ is minimal with the property that $j\leq y$ and $j\not\leq x$, we see that $j_{*}\leq x$.  This implies $x\wedge j=j_{*}$, and it follows that $(j_{*},j)\perspective(x,y)$.
	
	Conversely, suppose that $(j_{*},j)\perspective(x,y)$.  By Lemma~\ref{lem:perspective_irreducibles}, we get $j\vee x=y$ and $j\wedge x=j_{*}$.  Thus, $\jsdlabeling(x,y)\leq j$.  But, $j_{*}\leq x$, which means that $\jsdlabeling(x,y)\not\leq j_{*}$.  Since $j\in\JI(\Lattice)$, we must have $\jsdlabeling(x,y)=j$.
\end{proof}

% In fact, we conjecture that the property from Lemma~\ref{lem:jsd_labeling_perspective} \emph{characterizes} join-semi\-distributive lattices.
% 
% \begin{conjecture}
% 	Let $\Lattice$ be a finite lattice such that for every $(x,y)\in\Covers(\Lattice)$ there exists a unique $j\in\JI(\Lattice)$ such that $(j_{*},j)\perspective(x,y)$.  Then, $\Lattice$ is join semidistributive.
% \end{conjecture}

The labeling $\jsdlabeling$ also allows for a simple computation of canonical join representations.

\begin{proposition}[\cite{barnard19canonical}*{Lemma~19}]\label{prop:jsd_labeling_canonical}
	If $\Lattice=(L,\leq)$ is a finite, join-semidistributive lattice, then for every $x\in L$:
	\begin{displaymath}
		\canset(x) = \bigl\{\jsdlabeling(x',x)\mid x'\lessdot x\bigr\}.
	\end{displaymath}
\end{proposition}

\begin{proposition}\label{prop:jsd_labeling_core}
	The edge-labeling $\jsdlabeling$ of a finite, join-semidistributive lattice is a core labeling.
\end{proposition}
\begin{proof}
	Let $\Lattice=(L,\leq)$ be a finite, join-semidistributive lattice, and let $x\in L$.  
	
	If $j\in\Psi_{\jsdlabeling}(x)$, then there exist $x_{1},x_{2}\in L$ such that $x_{\downarrow}\leq x_{1}\lessdot x_{2}\leq x$ such that $\jsdlabeling(x_{1},x_{2})=j$.  By Lemma~\ref{lem:jsd_labeling_perspective}, this means that $(j_{*},j)\perspective(x_{1},x_{2})$ and by Lemma~\ref{lem:perspective_irreducibles} it follows that $j\leq x_{2}\leq x$.  As a consequence, $\bigvee\Psi_{\jsdlabeling}(x)\leq x$.  Moreover, by Proposition~\ref{prop:jsd_labeling_canonical}, we have $\canset(x)\subseteq\Psi_{\jsdlabeling}(x)$, and therefore $x=\bigvee\canset(x)\leq\bigvee\Psi_{\jsdlabeling}(x)$.  It follows that $\bigvee\Psi_{\jsdlabeling}(x)=x$.
	
	Now, if there exist $x,y\in L$ such that $\Psi_{\jsdlabeling}(x)=\Psi_{\jsdlabeling}(y)$, then
	\begin{displaymath}
		x = \bigvee\Psi_{\jsdlabeling}(x) = \bigvee\Psi_{\jsdlabeling}(y) = y.
	\end{displaymath}
	Hence, the assignment $x\mapsto\Psi_{\jsdlabeling}(x)$ is injective, and $\jsdlabeling$ is a core labeling.
\end{proof}

\begin{theorem}\label{thm:md_boolean_defect}
	Let $\Lattice=(L,\leq)$ be a finite, join-semidistributive lattice.  Then we have $\canset(x)=\Psi_{\jsdlabeling}(x)$ for all $x\in L$ if and only if $\Lattice$ is meet distributive.
\end{theorem}
\begin{proof}
	If $\Lattice$ is meet distributive, then every core $[x_{\downarrow},x]$ is isomorphic to a Boolean lattice.  If $\Bool(k)$ is the Boolean lattice with the ground set $M=\{1,2,\ldots,k\}$, then it is easy to verify that $\canset(M)=M=\Psi_{\jsdlabeling}(M)$.  This proves that $\canset(x)=\Psi_{\jsdlabeling}(x)$ for all $x\in L$.

	\medskip
	
	Conversely, suppose that $\Lattice$ is not meet distributive.  By Theorem~\ref{thm:meet_distributive_characterization}, $\Lattice$ is not lower semimodular, which means that there exist two elements $x,y\in L$ such that $x,y\lessdot x\vee y$ and---without loss of generality---$(x\wedge y,x)\notin\Covers(\Lattice)$.  This means that there exists $z\in L$ with $x\wedge y<z\lessdot x$.  Suppose that $\jsdlabeling(z,x)=j$.   By construction, $j\in\Psi_{\jsdlabeling}(x\vee y)$.  By perspectivity, $j\neq\jsdlabeling(x,x\vee y)$.
	
	Since $j\leq x$ and $j\not\leq z$, the assumption $x\wedge y<z$ implies that $j\not\leq y$.  Moreover, $z\not\leq y$ because otherwise $z=x\wedge y$.  This implies that $j\vee y=x\vee y=z\vee y$ and by \eqref{eq:jsd} we get $y\neq x\vee y=y\vee(z\wedge j)=y\vee j_{*}$.  Thus $j_{*}\not\leq y$ and since $j\in\JI(\Lattice)$, we find $y\wedge j\neq j_{*}$.  It follows that $\jsdlabeling(y,x\vee y)\neq j$.
	
	If $x,y$ are the only lower covers of $x\vee y$, then we have just shown that $j\notin\canset(x\vee y)$, which yields $\canset(x\vee y)\subsetneq\Psi_{\jsdlabeling}(x\vee y)$.  
	
	Suppose that there exists another lower cover $u$ of $x\vee y$ (different from $x$ and $y$).  If $z\leq u$, then we get $x\vee y=x\vee u=y\vee u$ and therefore by \eqref{eq:jsd} $x\vee y=u\vee(x\wedge y)\leq u\vee z=u$; which is a contradiction.  If $j\leq u$, then $\jsdlabeling(u,x\vee y)\neq j$ by Lemma~\ref{lem:jsd_labeling_perspective}.  Otherwise, we get $j\vee u=x\vee y=z\vee u$ and therefore $u\neq x\vee y=u\vee(z\wedge j)=u\vee j_{*}$, and this implies $\jsdlabeling(u,x\vee y)\neq j$.  Since $u$ was chosen arbitrarily, we conclude that $j\notin\canset(x\vee y)$, and we find that $\canset(x\vee y)\subsetneq\Psi_{\jsdlabeling}(x\vee y)$.
\end{proof}

We may define the \defn{Boolean defect} of a join-semidistributive lattice $\Lattice=(L,\leq)$ by
\begin{displaymath}
	\bdef(\Lattice) \defs \sum_{x\in L}\bigl\lvert\Psi_{\jsdlabeling}(x)\setminus\canset(x)\bigr\rvert.
\end{displaymath}
Theorem~\ref{thm:md_boolean_defect} has the following consequence, which strengthens \cite{muehle19the}*{Proposition~5.2}.

\begin{corollary}
	A finite join-semidistributive lattice $\Lattice$ has $\bdef(\Lattice)=0$ if and only if $\Lattice$ is meet distributive.
\end{corollary}

\subsection{The Canonical Join Complex of a Join-Semidistributive Lattice}
	\label{sec:canonical_join_complex_jsd}
Given a finite set $M$, a \defn{simplicial complex} on $M$ is a family $\Delta(M)$ of subsets of $M$, such that for every $F\in\Delta(M)$ and every $F'\subseteq F$ we have $F'\in\Delta(M)$.  The members of $\Delta(M)$ are \defn{faces}.  The \defn{face poset} of $\Delta(M)$ is the poset $\bigl(\Delta(M),\subseteq\bigr)$.  

N.~Reading has observed in \cite{reading15noncrossing}*{Proposition~2.2} that the set of canonical join representations of a lattice is closed under taking subsets.  In other words, it forms a simplicial complex; the \defn{canonical join complex} of $\Lattice$, denoted by $\CanComplex(\Lattice)$.  

We are now ready to prove Theorem~\ref{thm:meet_distributive_characterization_face_poset}.

\begin{proof}[Proof of Theorem~\ref{thm:meet_distributive_characterization_face_poset}]
	Let $\Lattice=(L,\leq)$ be a finite, join-semidistributive lattice.  By definition, the face poset of $\CanComplex(\Lattice)$ is precisely $\Bigl(\bigl\{\canset(x)\mid x\in L\bigr\},\subseteq\Bigr)$, and $\CLO_{\jsdlabeling}(\Lattice)$ is isomorphic to $\Bigl(\bigl\{\Psi(x)\mid x\in L\bigr\},\subseteq\Bigr)$.
	
	If $\Lattice$ is meet distributive, then Theorem~\ref{thm:md_boolean_defect} states that these two posets are isomorphic.  
	
	If $\Lattice$ is not meet distributive, then by Theorem~\ref{thm:md_boolean_defect}, there exists some $x\in L$ such that $\canset(x)\subsetneq\Psi_{\jsdlabeling}(x)$.  In particular, there exists $j\in\Psi_{\jsdlabeling}(x)\setminus\canset(x)$.  It follows that $\{j\}\subseteq\Psi_{\jsdlabeling}(x)$, but $\{j\}\not\subseteq\canset(x)$, so that the core label order of $\Lattice$ is not isomorphic to the face poset of $\CanComplex(\Lattice)$.
\end{proof}

Figure~\ref{fig:main_theorem} illustrates Theorem~\ref{thm:meet_distributive_characterization_face_poset} on a bigger example.

\begin{figure}
	\centering
	\begin{subfigure}[t]{.2\textwidth}
		\centering
		\begin{tikzpicture}\small
			\def\x{1};
			\def\y{1};
			\def\s{.6};
			\draw(2*\x,1*\y) node[fill,circle,scale=.6](n1){};
			\draw(1*\x,2*\y) node[fill,circle,scale=.6](n2){};
			\draw(2*\x,2*\y) node[fill,circle,scale=.6](n3){};
			\draw(3*\x,2*\y) node[fill,circle,scale=.6](n4){};
			\draw(1*\x,3*\y) node[fill,circle,scale=.6](n5){};
			\draw(2*\x,3*\y) node[fill,circle,scale=.6](n6){};
			\draw(3*\x,3*\y) node[fill,circle,scale=.6](n7){};
			\draw(1*\x,4*\y) node[fill,circle,scale=.6](n8){};
			\draw(2*\x,4*\y) node[fill,circle,scale=.6](n9){};
			\draw(3*\x,4*\y) node[fill,circle,scale=.6](n10){};
			\draw(2*\x,5*\y) node[fill,circle,scale=.6](n11){};
			\draw(n1) -- (n2) node[midway,fill=white,inner sep=.2pt,scale=.9]{$1$};
			\draw(n1) -- (n3) node[midway,fill=white,inner sep=.2pt,scale=.9]{$2$};
			\draw(n1) -- (n4) node[midway,fill=white,inner sep=.2pt,scale=.9]{$3$};
			\draw(n2) -- (n5) node[midway,fill=white,inner sep=.2pt,scale=.9]{$2$};
			\draw(n3) -- (n5) node[midway,fill=white,inner sep=.2pt,scale=.9]{$1$};
			\draw(n3) -- (n6) node[midway,fill=white,inner sep=.2pt,scale=.9]{$4$};
			\draw(n3) -- (n7) node[midway,fill=white,inner sep=.2pt,scale=.9]{$3$};
			\draw(n4) -- (n7) node[midway,fill=white,inner sep=.2pt,scale=.9]{$2$};
			\draw(n5) -- (n8) node[midway,fill=white,inner sep=.2pt,scale=.9]{$4$};
			\draw(n5) -- (n9) node[fill=white,inner sep=.2pt,scale=.9] at (1.25*\x,3.25*\y) {$3$};
			\draw(n6) -- (n8) node[fill=white,inner sep=.2pt,scale=.9] at (1.75*\x,3.25*\y) {$1$};
			\draw(n6) -- (n10) node[fill=white,inner sep=.2pt,scale=.9] at (2.25*\x,3.25*\y) {$3$};
			\draw(n7) -- (n9) node[fill=white,inner sep=.2pt,scale=.9] at (2.75*\x,3.25*\y) {$1$};
			\draw(n7) -- (n10) node[midway,fill=white,inner sep=.2pt,scale=.9]{$4$};
			\draw(n8) -- (n11) node[midway,fill=white,inner sep=.2pt,scale=.9]{$3$};
			\draw(n9) -- (n11) node[midway,fill=white,inner sep=.2pt,scale=.9]{$4$};
			\draw(n10) -- (n11) node[midway,fill=white,inner sep=.2pt,scale=.9]{$1$};
		\end{tikzpicture}
		\caption{A meet-distributive lattice.}
		\label{fig:md_lattice_2}
	\end{subfigure}
	\hspace*{.25cm}
	\begin{subfigure}[t]{.4\textwidth}
		\centering
		\begin{tikzpicture}\small
			\def\x{1};
			\def\y{1};
			\def\s{.6};
			\draw(2.5*\x,1*\y) node(n1){$\emptyset$};
			\draw(1*\x,2*\y) node(n2){$\{2\}$};
			\draw(2*\x,2*\y) node(n3){$\{1\}$};
			\draw(3*\x,2*\y) node(n4){$\{3\}$};
			\draw(4*\x,2*\y) node(n5){$\{4\}$};
			\draw(.5*\x,3*\y) node(n6){$\{1,2\}$};
			\draw(1.5*\x,3*\y) node(n7){$\{2,3\}$};
			\draw(2.5*\x,3*\y) node(n8){$\{1,3\}$};
			\draw(3.5*\x,3*\y) node(n9){$\{1,4\}$};
			\draw(4.5*\x,3*\y) node(n10){$\{3,4\}$};
			\draw(3.5*\x,4*\y) node(n11){$\{1,3,4\}$};
			\draw(n1) -- (n2);
			\draw(n1) -- (n3);
			\draw(n1) -- (n4);
			\draw(n1) -- (n5);
			\draw(n2) -- (n6);
			\draw(n2) -- (n7);
			\draw(n3) -- (n6);
			\draw(n3) -- (n8);
			\draw(n3) -- (n9);
			\draw(n4) -- (n7);
			\draw(n4) -- (n8);
			\draw(n4) -- (n10);
			\draw(n5) -- (n9);
			\draw(n5) -- (n10);
			\draw(n8) -- (n11);
			\draw(n9) -- (n11);
			\draw(n10) -- (n11);
		\end{tikzpicture}
		\caption{The core label order of the lattice from Figure~\ref{fig:md_lattice_2}.}
		\label{fig:md_lattice_2_clo}
	\end{subfigure}
	\hspace*{.25cm}
	\begin{subfigure}[t]{.3\textwidth}
		\centering
		\begin{tikzpicture}\small
			\def\x{1.5};
			\def\y{1.5};
			\draw(1*\x,1*\y) node(n4){$4$};
			\draw(2*\x,1*\y) node(n3){$3$};
			\draw(1.5*\x,2*\y) node(n1){$1$};
			\draw(2.5*\x,2*\y) node(n2){$2$};
			\draw(n1) -- (n2);
			\draw(n1) -- (n3);
			\draw(n1) -- (n4);
			\draw(n2) -- (n3);
			\draw(n3) -- (n4);
			\begin{pgfonlayer}{background}
				\fill[gray!50!white](1*\x,1*\y) -- (2*\x,1*\y) -- (1.5*\x,2*\y) -- cycle;
			\end{pgfonlayer}
		\end{tikzpicture}
		\caption{The canonical join complex of the lattice from Figure~\ref{fig:md_lattice_2}.  The highlighted region indicates a two-dimensional face.}
		\label{fig:md_lattice_2_can}
	\end{subfigure}
	\caption{Illustration of Theorem~\ref{thm:meet_distributive_characterization_face_poset}.}
	\label{fig:main_theorem}
\end{figure}

\subsection{The Intersection Property}
	\label{sec:intersection_property}
N.~Reading asked in \cite{reading16lattice}*{Problem~9.5} for conditions on a congruence-uniform lattice $\Lattice$ which would imply that $\CLO(\Lattice)$ is a lattice, too.  We gave one such property in \cite{muehle19the}*{Section~4.2}, which extends to arbitrary lattices as follows.  A finite $\Lattice=(L,\leq)$ with edge labeling $\lambda$ \defn{has the intersection property} if for all $x,y\in L$ there exists $z\in L$ such that $\Psi_{\lambda}(x)\cap\Psi_{\lambda}(y)=\Psi_{\lambda}(z)$.  

Provided that $\lambda$ is a core labeling, the proof of \cite{muehle19the}*{Theorems~1.3~and~4.7} carries over essentially verbatim to the more general case.

\begin{theorem}[\cite{muehle19the}*{Theorems~1.3~and~4.7}]\label{thm:intersection_property}
	Let $\Lattice$ be a finite lattice with core labeling $\lambda$.  The core label order $\CLO_{\lambda}(\Lattice)$ is a meet-semilattice if and only if $\Lattice$ has the intersection property.  It is a lattice if and only if $\grtst_{\downarrow}=\least$.
\end{theorem}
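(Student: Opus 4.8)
The plan is to view $\Alt(\Lattice)$ as the subposet of the Boolean lattice $\bigl(2^{\JI(\Lattice)},\subseteq\bigr)$ consisting of the sets $\Psi(x)$, ordered by inclusion; here I use that $\Psi$ is injective, so that $\Alt(\Lattice)$ is genuinely a partial order. Since $\least$ has no lower cover we have $\least_{\downarrow}=\least$ and hence $\Psi(\least)=\emptyset$, so $\least$ is the global minimum of $\Alt(\Lattice)$. The elementary fact I would use throughout is a \emph{support bound} for labels: if $j\in\Psi(x)$, say $j=\gamma(u,v)$ with $x_{\downarrow}\leq u\lessdot v\leq x$, then perspectivity of $(u,v)$ and $(j_{*},j)$ forces $j\leq v\leq x$, so every label occurring in $\Psi(x)$ lies weakly below $x$ in $\Lattice$.

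For the first equivalence, the easy direction is that the intersection property makes $\Alt(\Lattice)$ a meet-semilattice: given $x,y$ choose $z$ with $\Psi(z)=\Psi(x)\cap\Psi(y)$; then $z$ is a common lower bound, and any common lower bound $w$ satisfies $\Psi(w)\subseteq\Psi(x)\cap\Psi(y)=\Psi(z)$, so $w\sqsubseteq z$ and $z$ is the meet. The converse is the substantial direction. Assuming $\Alt(\Lattice)$ is a meet-semilattice, let $z$ be the meet of $x$ and $y$; by definition $\Psi(z)\subseteq\Psi(x)\cap\Psi(y)$, and the task is the reverse inclusion. If it failed there would be a label $j\in\bigl(\Psi(x)\cap\Psi(y)\bigr)\setminus\Psi(z)$, and the goal becomes to produce a common lower bound of $x$ and $y$ strictly above $z$ in $\Alt(\Lattice)$, contradicting maximality of the meet.

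I expect this last step to be the main obstacle. I would attack it using the gradedness of $\Alt(\Lattice)$ by the rank function $x\mapsto\bigl\lvert\Psi(x)\bigr\rvert$ together with an exchange property for core labels: from $z\sqsubseteq x$ and $j\in\Psi(x)\setminus\Psi(z)$ one wants $w$ with $z\sqsubseteq w\sqsubseteq x$ and $\Psi(w)=\Psi(z)\cup\{j\}$, since then $\Psi(w)\subseteq\Psi(x)\cap\Psi(y)$ and $z\sqsubsetneq w$, the desired contradiction. The delicate point is that such an exchange cannot hold unconditionally — otherwise every subset of $\Psi(x)$ would be realized below $x$ and $\Alt(\Lattice)$ would always be a meet-semilattice — so the meet-semilattice hypothesis must be fed into the construction of $w$, for instance by forming the meet in $\Alt(\Lattice)$ of all $w'$ with $z\sqsubseteq w'\sqsubseteq x$ and $j\in\Psi(w')$ and analysing its core labels. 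Proving that this meet realizes exactly $\Psi(z)\cup\{j\}$ is where perspectivity of covers and the behaviour of $\gamma$ under meets in a congruence-uniform lattice have to be exploited.

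For the second equivalence I use that a finite meet-semilattice is a lattice precisely when it has a maximum. A maximum $t$ of $\Alt(\Lattice)$ must satisfy $\Psi(x)\subseteq\Psi(t)$ for all $x$; since each join-irreducible $j$ has $j_{\downarrow}=j_{*}$ and hence $\gamma(j_{*},j)=j\in\Psi(j)\subseteq\Psi(t)$, this forces $\Psi(t)=\JI(\Lattice)$. The support bound then gives $j\leq t$ for every $j\in\JI(\Lattice)$, so $t=\bigvee\JI(\Lattice)=\grtst$. Finally $\Psi(\grtst)=\JI(\Lattice)$ holds iff $\grtst_{\downarrow}=\least$: if $\grtst_{\downarrow}=\least$ the core of $\grtst$ is all of $\Lattice$, so $\Psi(\grtst)=\gamma\bigl(\Covers(\Lattice)\bigr)=\JI(\Lattice)$ and $\grtst$ is the maximum; if instead $\grtst_{\downarrow}>\least$, pick an atom $j_{0}\leq\grtst_{\downarrow}$, and since $(j_{0})_{*}=\least$ the support bound shows $j_{0}\not\leq u$ whenever $\gamma(u,v)=j_{0}$, so $j_{0}$ cannot occur as a label inside $[\grtst_{\downarrow},\grtst]$ and $j_{0}\notin\Psi(\grtst)$, leaving $\Alt(\Lattice)$ without a maximum. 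Combining this with the first equivalence, $\Alt(\Lattice)$ is a lattice iff it is a meet-semilattice with maximum, i.e.\ iff $\Lattice$ has the intersection property and $\grtst_{\downarrow}=\least$; the remaining point to check — and the second place where the structural analysis above is needed — is that $\grtst_{\downarrow}=\least$ already forces the intersection property, so that the single condition $\grtst_{\downarrow}=\least$ suffices.
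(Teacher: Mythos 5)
First, note that the paper does not prove this statement at all---it is imported verbatim from \cite{muehle19the} (Theorems~1.3 and~4.7, with Lemma~3.9 used later for the greatest-element criterion)---so the comparison can only be against the argument in that source, and against correctness.

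Your easy direction (intersection property implies meet-semilattice) and your ``support bound'' ($j\in\Psi(x)$ forces $j\leq x$) are both correct. The genuine gap is the hard direction, which you leave as an unexecuted plan: the ``exchange property'' you propose is, as you yourself observe, false unconditionally, and the conditional repair (taking the meet in $\Alt(\Lattice)$ of all $w'$ with $z\sqsubseteq w'\sqsubseteq x$ and $j\in\Psi(w')$) is never analysed; moreover the gradedness of $\Alt(\Lattice)$ by $x\mapsto\lvert\Psi(x)\rvert$ that you invoke is itself unjustified. None of this machinery is needed. You already wrote down the key fact in your last paragraph but failed to apply it here: for every join-irreducible $j$ one has $j_{\downarrow}=j_{*}$, so the core of $j$ consists of the single cover $(j_{*},j)$ and $\Psi(j)=\{j\}$ \emph{exactly}. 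Hence if $z$ is the meet of $x$ and $y$ in $\Alt(\Lattice)$ and $j\in\Psi(x)\cap\Psi(y)$, then $\Psi(j)=\{j\}\subseteq\Psi(x)$ and $\Psi(j)\subseteq\Psi(y)$ give $j\sqsubseteq x$ and $j\sqsubseteq y$, so $j\sqsubseteq z$, \ie $j\in\Psi(z)$; combined with the trivial inclusion $\Psi(z)\subseteq\Psi(x)\cap\Psi(y)$ this yields $\Psi(z)=\Psi(x)\cap\Psi(y)$, and the intersection property follows in three lines. (This singleton argument is the route taken in the cited source; no perspectivity analysis of $\gamma$ under meets is required.)

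For the second equivalence, your reduction is sound as far as it goes: a finite meet-semilattice is a lattice iff it has a greatest element, your identification of a putative maximum with $\grtst$ via the support bound is correct, and your proof that $\Psi(\grtst)=\JI(\Lattice)$ holds iff $\grtst_{\downarrow}=\least$ (including the atom argument in the core $[\grtst_{\downarrow},\grtst]$) checks out. But you end by flagging, without proof, that ``$\grtst_{\downarrow}=\least$ already forces the intersection property,'' and your proposal supplies no argument for this---so under the literal standalone reading of the second sentence your proof is incomplete. The intended reading, which is also how the present paper applies the theorem in its proof of Theorem~\ref{thm:distributive_intersection_property}, is that the two sentences combine: $\Alt(\Lattice)$ is a lattice iff it is a meet-semilattice (\ie $\Lattice$ has the intersection property) \emph{and} has a greatest element, the latter being equivalent to $\grtst_{\downarrow}=\least$ by \cite{muehle19the}*{Lemma~3.9}. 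You should not attempt to derive the intersection property from $\grtst_{\downarrow}=\least$ alone; nothing in your support-bound analysis localizes at the top, and no such implication is established in the source. A last small point: you assert the injectivity of $\Psi$ without proof; it does follow from your own support bound together with Proposition~\ref{prop:congruence_uniform_canonical}, since $\Canonical(x)\subseteq\Psi(x)$ and every label in $\Psi(x)$ lies below $x$, whence $x=\bigvee\Psi(x)$, but this deserves a line.
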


We conclude this article with the proof of Theorem~\ref{thm:meet_distributive_intersection_property}.

\begin{proof}[Proof of Theorem~\ref{thm:meet_distributive_intersection_property}]
	Let $\Lattice=(L,\leq)$ be a finite meet-distributive lattice.  For $x,y\in L$ we conclude from Theorem~\ref{thm:md_boolean_defect} that $\Psi_{\jsdlabeling}(x)=\canset(x)$ and $\Psi_{\jsdlabeling}(y)=\canset(y)$.  It follows that $Z=\canset(x)\cap\canset(y)$ is a face of $\CanComplex(\Lattice)$, which means that there exists $z\in L$ with $Z=\canset(z)=\Psi_{\jsdlabeling}(z)$.  We have thus established that $\Lattice$ has the intersection property.  
	
	Lemma~3.9 of \cite{muehle19the} states that $\CLO(\Lattice)$ has a greatest element if and only if $\grtst_{\downarrow}=\least$.  Now, if $\Lattice$ is meet-distributive, then the interval $[\grtst_{\downarrow},\grtst]$ is isomorphic to a Boolean lattice.  Thus, $\grtst_{\downarrow}=\least$ if and only if $\Lattice$ is Boolean.  The claims then follows from Theorem~\ref{thm:intersection_property}.
\end{proof}

% \bib, bibdiv, biblist are defined by the amsrefs package.
\begin{bibdiv}
\begin{biblist}

\bib{adaricheva03join}{article}{
      author={Adaricheva, Kira~V.},
      author={Gorbunov, Viktor~A.},
      author={Tumanov, V.~I.},
       title={{Join-Semidistributive Lattices and Convex Geometries}},
        date={2003},
     journal={Advances in Mathematics},
      volume={173},
       pages={1\ndash 49},
}

\bib{armstrong09sorting}{article}{
      author={Armstrong, Drew},
       title={{The Sorting Order on a Coxeter Group}},
        date={2009},
     journal={Journal of Combinatorial Theory, Series A},
      volume={116},
       pages={1285\ndash 1305},
}

\bib{bancroft11the}{article}{
      author={Bancroft, Erin},
       title={{The Shard Intersection Order on Permutations}},
        date={2011},
      eprint={arXiv:1103.1910},
}

\bib{barnard19canonical}{article}{
      author={Barnard, Emily},
       title={{The Canonical Join Complex}},
        date={2019},
     journal={The Electronic Journal of Combinatorics},
      volume={26},
       pages={Research paper P1.24, 25 pages},
}

\bib{clifton18canonical}{article}{
      author={Clifton, Alexander},
      author={Dillery, Peter},
      author={Garver, Alexander},
       title={{The Canonical Join Complex for Biclosed Sets}},
        date={2018},
     journal={Algebra Universalis},
      volume={79},
}

\bib{dilworth40lattices}{article}{
      author={Dilworth, Robert~P.},
       title={{Lattices with Unique Irreducible Decompositions}},
        date={1940},
     journal={Annals of Mathematics},
      volume={41},
       pages={771\ndash 777},
}

\bib{edelman80meet}{article}{
      author={Edelman, Paul~H.},
       title={{Meet-Distributive Lattices and the Anti-Exchange Closure}},
        date={1980},
     journal={Algebra Universalis},
      volume={10},
       pages={290\ndash 299},
}

\bib{freese95free}{book}{
      author={Freese, Ralph},
      author={Je{\v{z}}ek, Jaroslav},
      author={Nation, James~B.},
       title={{Free Lattices}},
   publisher={American Mathematical Society},
     address={Providence},
        date={1995},
}

\bib{garver17enumerative}{article}{
      author={Garver, Alexander},
      author={McConville, Thomas},
       title={{Enumerative Properties of Grid-Associahedra}},
        date={2017},
      eprint={arXiv:1705.04901},
}

\bib{garver18oriented}{article}{
      author={Garver, Alexander},
      author={McConville, Thomas},
       title={{Oriented Flip Graphs of Polygonal Subdivisions and Noncrossing Tree Partitions}},
        date={2018},
     journal={Journal of Combinatorial Theory (Series A)},
      volume={158},
       pages={126\ndash 175},
}

\bib{muehle19the}{article}{
      author={M{\"u}hle, Henri},
       title={{The Core Label Order of a Congruence-Uniform Lattice}},
        date={2019},
     journal={Algebra Universalis},
      volume={80},
       pages={Research paper 10, 22 pages},
}

\bib{muehle21noncrossing}{article}{
      author={M{\"u}hle, Henri},
       title={{Noncrossing Arc Diagrams, Tamari Lattices, and Parabolic Quotients of the Symmetric Group}},
        date={2021},
     journal={Annals of Combinatorics},
       note={To appear},
}

\bib{petersen13on}{article}{
      author={Petersen, T.~Kyle},
       title={{On the Shard Intersection Order of a Coxeter Group}},
        date={2013},
     journal={SIAM Journal on Discrete Mathematics},
      volume={27},
       pages={1880\ndash 1912},
}

\bib{reading11noncrossing}{article}{
      author={Reading, Nathan},
       title={{Noncrossing Partitions and the Shard Intersection Order}},
        date={2011},
     journal={Journal of Algebraic Combinatorics},
      volume={33},
       pages={483\ndash 530},
}

\bib{reading15noncrossing}{article}{
      author={Reading, Nathan},
       title={{Noncrossing Arc Diagrams and Canonical Join Representations}},
        date={2015},
     journal={SIAM Journal on Discrete Mathematics},
      volume={29},
       pages={736\ndash 750},
}

\bib{reading16lattice}{collection}{
      author={Reading, Nathan},
      editor={Gr{\"a}tzer, George},
      editor={Wehrung, Friedrich},
       title={{Lattice Theory of the Poset of Regions}},
   publisher={Birkh{\"a}user},
     address={Cham},
        date={2016},
      volume={2},
}

\bib{whitman41free}{article}{
      author={Whitman, Philip~M.},
       title={{Free Lattices}},
        date={1941},
     journal={Annals of Mathematics},
      volume={42},
       pages={325\ndash 330},
}

\bib{whitman42free}{article}{
      author={Whitman, Philip~M.},
       title={{Free Lattices II}},
        date={1942},
     journal={Annals of Mathematics},
      volume={43},
       pages={104\ndash 115},
}

\end{biblist}
\end{bibdiv}

\end{document}